\theoremstyle{definition}
\newtheorem{thm}{Theorem}[section]
\newtheorem{ex}[thm]{Example}
\newtheorem{prop}[thm]{Proposition}
\newtheorem{rem}[thm]{Remark}
\newcommand{\mr}[1]{{\mathrm{#1}}}
\title[Hyperbolic $3$-manifolds and Cluster Algebras]{Hyperbolic $3$-manifolds and Cluster Algebras}
\author{
  \name[Kentaro Nagao]{Kentaro Nagao}
  \address{Graduate School of Mathematics, Nagoya University}
  \email{kentaron@math.nagoya-u.ac.jp}
} 
\author{
  \name[Yuji Terashima]{Yuji Terashima}
  \address{Department of Mathematical and Computing Sciences, Tokyo Institute of Technolog}
  \email{tera@is.titech.ac.jp}
} 
\author{
  \name[Masahito Yamazaki]{Masahito Yamazaki}
  \address{Kavli Institute for the Physics and Mathematics of the Universe, University of Tokyo}
  \email{masahito.yamazaki@ipmu.jp}
} 
\subjclass[2010]{Primary: 13F60, Secondary: 57M27}
\begin{document}

\maketitle


\begin{abstract}
We advocate the use of cluster algebras and their $y$-variables in the
 study of hyperbolic 3-manifolds.
We study hyperbolic structures on the mapping tori of pseudo-Anosov mapping classes of punctured surfaces,
and show that cluster $y$-variables naturally give the solutions of the
 edge-gluing conditions of ideal tetrahedra. We also comment on the
 completeness of hyperbolic structures.
\end{abstract}

\bigskip


\setcounter{section}{-1}

\section{Introduction}
\subsection{Cluster Algebras}
Cluster algebras were introduced by Fomin and Zelevinsky (\cite{fomin-zelevinsky1}) around 2000. 
Since then, many authors have uncovered beautiful connections between 
the theory of cluster algebras and a wide range of mathematics such as

\begin{itemize}
\item dual canonical bases and their relations with preprojective algebras and quiver varieties (\cite{fomin-zelevinsky3}, \cite{leclerc_ICM}, \cite{nakajima-cluster}, \cite{kimura})
\item total positivity (\cite{fomin_ICM})
\item (higher) Teichm\"{u}ller theory and its quantization (\cite{fock-goncharov-teichmuller,fock-goncharov-dual,fock-goncharov-quantum-cluster}, \cite{teschner,teschner2})
\item $2$-dimensional hyperbolic geometry (\cite{cluster-poisson}, \cite{FST})
\item cluster categories (\cite{keller-survay}, \cite{amiot}, \cite{plamondon})
\item discrete integrable systems (\cite{kedem}, \cite{kns})
\item Donaldson-Thomas theory (\cite{ks}, \cite{cluster-via-DT})
\item supersymmetric gauge theories (\cite{GMN2}, \cite{CNV},
      \cite{Eager})
\end{itemize}
The goal of this paper is to add yet another item to this list: \emph{the
theory of hyperbolic $3$-manifolds}. This paper is a companion to
\cite{NTY}, which discusses the application of cluster algebras to the
physics of 3d $\mathcal{N}=2$ supersymmetric gauge theories.

\subsection{\texorpdfstring{Hyperbolic $3$-manifolds}{Hyperbolic 3-manifolds}}
A hyperbolic $3$-manifold (with cusps) has a decomposition into ideal
tetrahedra. This makes it possible for us to compute invariants of the $3$-manifold, 
such as the hyperbolic volume and the Chern-Simons invariant
\cite{NZ_volume,neumann_CS}. 

An ideal tetrahedron is parametrized by a complex number 
called a {\it shape parameter}. 
Given a topological decomposition of the $3$-manifold into ideal tetrahedra, 
we need to find shape parameters which satisfy {\it edge-gluing equations} (\S \ref{subsec_hyperbolic}) 
in order to obtain a hyperbolic structure on the $3$-manifold.
Moreover, 
the {\it cusp equations} (\S \ref{subsec_complete}) should hold for the complete hyperbolic structure.
In general, it is a rather non-trivial problem to systematically 
find solutions of these equations.

In this paper, we study mapping tori $M_\varphi$ of mapping classes $\varphi$ of
a surface $\Sigma$ with punctures. We mainly discuss the case that the mapping torus admits a hyperbolic structure.

The main results of this paper are summarized as follows: 
\begin{itemize}
\item 
Solving the periodicity equation in Theorem \ref{thm_main} for cluster 
transformations, we get a solution of the edge-gluing equations of 
the mapping torus $M_\varphi$ with an ideal 
triangulation induced by the cluster transformations. 
\item Shape parameters of tetrahedra are given by the cluster
      $y$-variables, where the initial values of the $y$-variables are
      taken to be the solution of the periodicity equation.
\item The cusp condition is written as a simple condition on a product
      of the initial
      values of the $y$-variables.
\end{itemize}

\begin{rem}
\label{rem.KT}
The complete hyperbolic structure gives a non-zero solution 
of the periodicity equation, thanks to the result of \cite[Cor 2.6]{kitayama-terashima}.
\end{rem}

\begin{rem}
This paper has grown up from our attempts to formulate
the results of \cite{nakanishi_kashaev} and \cite{TY1,TY2} in mathematically rigorously.

In \cite{TY1}, the authors conjectured 
an equivalence of the partition function of a $3d$ $\mathcal{N} = 2$ gauge theory on a duality wall and that of the $\mathrm{SL}(2,\mathbb{R})$ Chern-Simons theory on a mapping torus.
This is a $3d/3d$ counterpart of the $4d/2d$ correspondence, known as the AGT relation (\cite{AGT}).

 In \cite{TY2}, the authors demonstrated that a limit of the 3d N = 2 partition
function reproduces the hyperbolic volume of the mapping torus in the case
of the once-punctured torus by using quantum cluster transformations.
The key observation in \cite{TY2} was that the shape parameters satisfying
edge-gluing equations (as previously analyzed in \cite{one_punctured}) appear at the
saddle point.

In \cite{nakanishi_kashaev}, it was shown that classical
dilogarithm identities (\cite{nakanishi-periodicity}) naturally emerge
from quantum dilogarithm identities (\cite{keller-q-dilog}, \cite{dilog_id}) 
by the saddle-point method. 

It will be interesting to learn from physics about the ``quantum'' aspects of hyperbolic geometry of $3$-manifolds.

\end{rem}

\begin{rem}\label{rem.TBA}
There is a known relation between cluster transformations and 
integrable systems \cite{FZ_Ysystem,Keller_periodicity,Nakanishi_id,kedem}.
With this, our theorem, which connects 
cluster transformations to $3$-manifolds, 
gives a natural explanation for mysterious and interesting relations 
between $3$-manifolds and 
conformal field theories/integrable systems, originally found in \cite{GT,NahmRT,Dupont}. 
We illustrate this point by an example in the final subsection (the corresponding $3$-manifold is not hyperbolic).

The differences between the two setups, (a) integrable systems and (b) hyperbolic $3$-manifolds,  can be stated in several different languages:
\begin{itemize}
\item
We have periodicity conditions on the cluster $y$-variables both in (a) and in (b).
However, in (a), periodicity is 
imposed as identities of rational functions on $y_i$'s, 
whereas in (b) we solve the periodicity equations to determine values of
     $y_i$, which in turn determines the hyperbolic structure of the mapping tori.

\item
In (a), the product of the quantum dilogarithms associated to the sequence of mutations is equal to $1$ 
(quantum dilogarithm identity \cite{keller-q-dilog}). 
In (b), the product gives a non-trivial action of the mapping class in the quantum
     Teichm\"{u}ller theory.
     
\item 
In terms of surface triangulations and flips, after a sequence of flips,
in (a) we get the original triangulation (up to a permutation of
      vertices), while in (b) we get the original triangulation 
      pulled back by the mapping class.

\item 
A mutation provides a derived equivalence of $3$-dimensional Calabi-Yau categories associated to quivers with potential (\cite{dong-keller}).
In (a), the composition of the derived equivalences is an identity functor, while in (b) it gives the action of the mapping class on the derived category (see \cite{MCG_DT}).

\item 
The derived equivalence induced by mutation corresponds to a
      wall-crossing in the space of stability conditions, 
and a sequence of mutations gives a new chamber.
In (a), the new chamber coincides with the original one, while in (b) 
the chamber is obtained from the original one by the action of the mapping class on the space of stability
      conditions.
In other words, the former is the wall crossing associated with a contractible cycle, whereas the latter corresponds to a non-contractible cycle with non-trivial monodromies ({\it cf.} \cite{galaxy}). \end{itemize}
\end{rem}

\section*{Acknowledgments}

K.\ N.\ is supported by the Grant-in-Aid for Research Activity Start-up (No.\ 22840023) and for Scientific Research (S) (No.\ 22224001). 
Y.\ T.\ is supported in part by the Grants-in-Aid for Scientific
Research, JSPS (No 22740036).
M.\ Y.\ would like to thank PCTS and its anonymous donor for generous support.
We would like to thank H.\ Fuji, A.\ Kato, S.\ Kojima, H.\
Masai, T.\ Nakanishi, S.\ Terashima, T.\ Yoshida and D.\ Xie for helpful conversation.

\section{Cluster Algebras}
\subsection{Quiver Mutation}\label{subsec_qm}
In this paper, we always assume that a quiver has
\begin{itemize}
\item the vertex set $I=\{1,\ldots,n\}$, and
\item no loops and oriented $2$-cycles (see Figure \ref{fig_loops}).
\end{itemize}
\begin{figure}[htbp]
  \centering
  \input{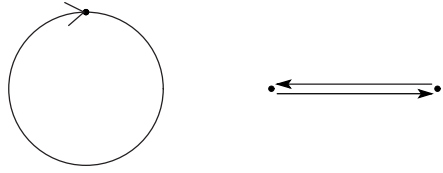}
  \caption{A loop (left) and an oriented $2$-cycle (right) of a quiver.}
\label{fig_loops}
\end{figure}
For vertices $i$ and $j\in I$, we define
\[
Q(i,j)=\sharp\{\text{arrows from $i$ to $j$}\},\quad \overline{Q}(i,j)=Q(i,j)-Q(j,i).
\]
Note that the quiver $Q$ is uniquely determined by the skew-symmetric matrix
$\overline{Q}(i,j)$ (or equivalently $Q(i,j)$) under the assumption above.\footnote{In this paper
we restrict ourselves to cluster algebras associated with skew-symmetric matrices.}

For the vertex $k$, we define a new quiver $\mu_kQ$ (mutation of $Q$
at vertex $k$) by an
anti-symmetric matrix
\[
\overline{\mu_kQ}(i,j)=
\begin{cases}
-\overline{Q}(i,j) &  \text{$i=k$ or $j=k$}, \\
\overline{Q}(i,j) + Q(i,k)Q(k,j) - Q(j,k)Q(k,i) &  \text{$i,j\neq k$}.
\end{cases}
\]

\subsection{Cluster Variables}\label{subsec_cv}
Given a sequence $\mathbf{k}=(k_1,\ldots,k_l)$ of vertices
and ``time'' parameters $t=0,\ldots, l$, we define
\[
Q_0:=Q, \quad Q_t:=\mu_{k_{t-1}}\cdots\mu_{k_1}Q \quad (t>0) .
\] 
For initial values $x_i(0)=x_i$ and $y_i(0)=y_i$, 
we define the cluster $x$-variables $x_i(t)$ and the cluster
$y$-variables (coefficients) $y_i(t)$ ($i\in I$) by
\begin{equation}\label{eq_x}
x_i(t+1)=\displaystyle\frac{\prod_jx_j(t)^{Q_t(i,j)}+\prod_j x_j(t)^{Q_t(j,i)}}{x_i(t)},
\end{equation}
and
\begin{equation}\label{eq_y}
y_i(t+1)=
\begin{cases}
y_k(t)^{-1} & i=k,\\
y_i(t) \, y_k(t)^{{Q_t}(k,i)} \, \bigl(1+y_k(t)\bigr)^{\overline{Q_t}(i,k)} & i\neq k.
\end{cases}
\end{equation}


\section{Triangulated Surfaces and Quivers}
Let $\Sigma$ be a closed connected oriented surface and $M$ be a finite set of points on $\Sigma$, called {\it punctures}. 
We assume that $M$ is non-empty and $(\Sigma,M)$ is not a sphere with less than four punctures.

We choose an ideal triangulation $\tau$ of $\Sigma$, 
{\it i.e.}, we decompose $\Sigma$ into triangles whose vertices
are located at the punctures.
We will not allow self-folded arcs (see Figure \ref{fig_self}) in this paper.
\begin{figure}[htbp]
  \centering
  \input{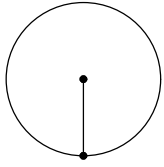}
  \caption{We do not allow self-folded triangles as in this Figure.}
\label{fig_self}
\end{figure}

\subsection{Quiver Associated to a Triangulation}
For a triangulation $\tau$ without self-folded arcs
we will define a quiver $Q_\tau$ whose vertex set $I$ is the set of arcs in $\tau$.

For a triangle $\Delta$ and arcs $i$ and $j$, we define a skew-symmetric integer matrix $\overline{Q}^\Delta$ by 
\[
\overline{Q}^\Delta(i,j):=
\begin{cases}
1 & \text{$\Delta$ has sides $i$ and $j$, with $i$ following $j$ in the clockwise order,}\\
-1 & \text{the same holds, but in the counter-clockwise order,} \\
0 & \text{otherwise.}
\end{cases}
\]
We define
\[
\overline{Q}_\tau:=\sum_{\Delta\in \tau}\overline{Q}^\Delta,
\]
where the sum is taken over all triangles in $\tau$. 
Let $Q_\tau$ denote the quiver associated to the matrix $\overline{Q}_\tau$.

For an arc $i$ in the triangulation $\tau$, 
we can flip the edge $i$ to get a new triangulation $f_i(\tau)$.
This operation is compatible with a mutation at vertex $i$:
\[
Q_{f_i(\tau)}=\mu_i(Q_{\tau}).
\]


\subsection{Mapping Class Group Action}\label{subsec_MCG}

For a triangulation $\tau$, we define
\[
T=T(\tau):=\mathbb{C}(y_e)_{e\in \tau_1}
,\quad 
T^\vee=T^\vee(\tau):=\mathbb{C}(x_e)_{e\in \tau_1}.
\]
For a puncture $m\in M$, take a sufficiently small circle around $m$ and let $e_1,\ldots,e_n$ be the sequence of arcs which intersect with the circle, where $e_1,\ldots,e_n$ may have multiplicity.
We define
\begin{align}
y_m:=\prod_{i=1}^n y_{e_i} 
\label{def_ym}
\end{align}
and
\[
\underline{T}=\underline{T}(\tau):=\mathbb{C}[y_e,y_e^{-1}]_{e\in \tau_1}\big/(y_m)_{m\in M}.
\]

Let us fix a mapping class $\varphi$. Then the two triangulations $\tau$
and $\varphi(\tau)$ are related by a sequence of flips, together with
appropriate changes of labels.
More formally, there exists a sequence 
\[
\mathbf{k}=(k_1,\ldots,k_l)\in (\tau_1)^l
\]
such that 
the two triangulations $\tau$ and $\varphi(\tau)$ are related by the
sequence of flips associated to $\mathbf{k}$ (see \cite[Proposition
3.8]{FST}).
Note that a flip provides a canonical bijection of the edges of the
triangulations. We can represent the composition of the bijections
by a permutation $\sigma\in \mathfrak{S}_{I}$.
We define the automorphisms 
\[
\mathrm{CT}_\varphi\colon T(\tau)=T(\varphi(\tau))\overset{\sim}{\longrightarrow} T(\tau),\quad
\mathrm{CT}^\vee_\varphi\colon T^\vee(\tau)=T^\vee(\varphi(\tau))\overset{\sim}{\longrightarrow} T^\vee(\tau)
\]
by 
\[
\mathrm{CT}_\varphi(y_e)=y_{\sigma(e)}(l),\quad 
\mathrm{CT}^\vee_\varphi(x_e)=x_{\sigma(e)}(l)
\]
Thanks to the result \cite[Theorem 3.10]{FST} and the pentagon relation of cluster transformations, $\mathrm{CT}_\varphi$ and $\mathrm{CT}^\vee_\varphi$ are independent of the choices of the sequences of flips and provides a well-defined action of the mapping class group on $T(\tau)$.


\section{Pseudo-Anosov Mapping Tori}\label{sec_pA}

Let $\tau$, $\varphi$, $\mathbf{k}$ and $\sigma$ be as in \S \ref{subsec_MCG}.
We assume that no triangles are self-folded.

Let $h=h(t)$ be the edge flipped at $t$ and $h'$ be the edge
after the flip. 
Let $a$, $b$, $c$ and  $d$ be the edges of the quadrilateral in the triangulations whose diagonals are $h$ and $h'$.
We associate a topological tetrahedron $\Delta=\Delta(t)$ whose edges are labeled by $a$, $b$, $c$, $d$, $h$ and $h'$ (see Figure \ref{fig_flip_tetra}).
\begin{figure}[htb]
  \centering
  \input{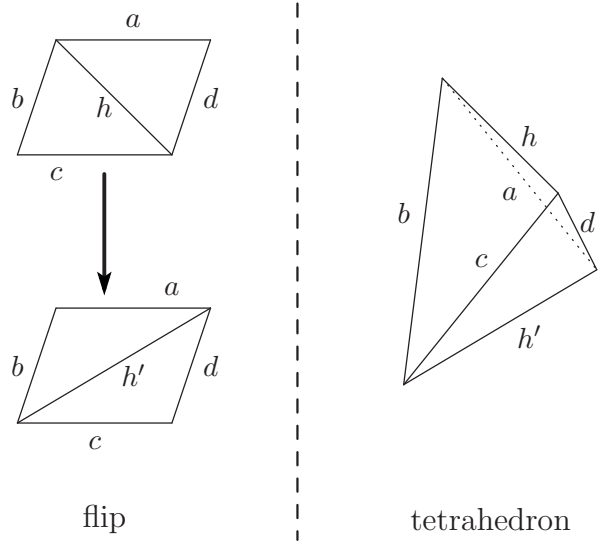}
  \caption{A flip in a 2d triangulation can be traded for a 3d tetrahedron.}
  \label{fig_flip_tetra}
\end{figure}

For any pseudo-Anosov mapping class $\varphi$, this  
provides a topological tetrahedron decompositions of the mapping
torus (\cite{agol}). A mapping class $\varphi$ is pseudo-Anosov if and only 
if the mapping torus has a hyperbolic structure.


\section{Equations for Hyperbolic Structure}\label{sec_equations}
\subsection{Shape Parameters}\label{subsec_shape}
For an ideal tetrahedron in $\mathbb{H}^3$ with vertices $0$, $1$, $z$
and $\infty$ (Figure \ref{fig_ideal}), we associate the {\it shape parameter} $z$ with the edge connecting $0$ and $\infty$. 
\begin{figure}[htbp]
  \centering
  \input{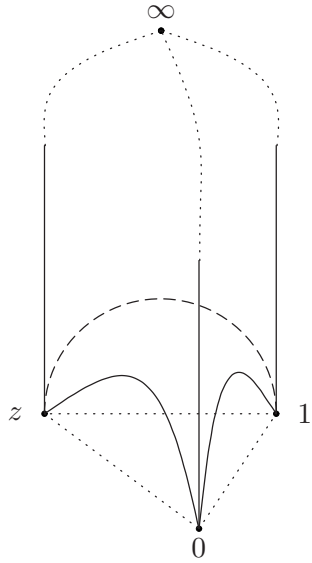}
  \caption{An ideal tetrahedron with shape parameter $z$.}
  \label{fig_ideal}
\end{figure}
For an ideal tetrahedron, a pair of mutually non-intersecting edges has a common shape parameter, and the shape parameters for the three pairs of mutually non-intersecting edges are given by (Figure \ref{fig_shape})
\begin{equation}\label{eq_parameters}
z,\quad 1-z^{-1},\quad \frac{1}{1-z} \, .
\end{equation}

\begin{figure}[htbp]
  \centering
  \scalebox{0.8}{\input{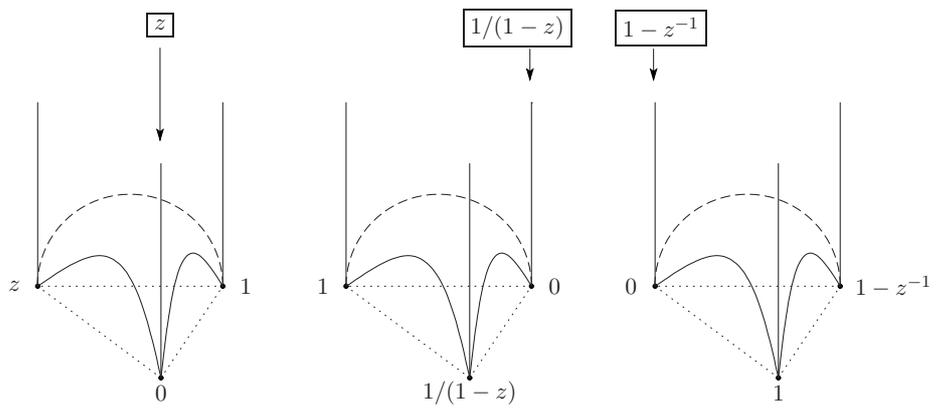}}
  \caption{The three shape parameters of an ideal tetrahedron.}
  \label{fig_shape}
\end{figure}

We take a sequence of flips and associated topological decomposition of
the mapping torus as in Section 3. 
For $t\in\mathbb{Z}$, let $\Delta(t)$ denote the $t$-th tetrahedron, where $\Delta(t)$ and $\Delta(t+l)$ are identified for any $t$.
Let $Z(t)$ denote the shape parameter of $\Delta(t)$ at the edge $h(t)$, the edge flipped at time $t$.
Note that the sequence $(Z(t))$ satisfies shape parameter periodicity 
\begin{equation}\label{eq_periodicity}
Z(t+l)=Z(t).
\end{equation}

For a tetrahedron $\Delta$, let $\Delta_1$ be the set of edges of $\Delta$.
We define
\[
\overline{E}:=\coprod_{t\in\mathbb{Z}}\Delta(t)_1.
\]
Let $\widetilde{E}$ denote the set of all edges in the tetrahedron decomposition of $\Sigma\times \mathbb{R}$ and $\pi\colon \overline{E} \to \widetilde{E}$ be the canonical surjection.

Given parameters $(Z(t))_{t\in \mathbb{Z}}$,  
we can define associated parameter $Z_e=Z_e(t)$ for any $t\in
\mathbb{Z}$ and $e\in \Delta(t)_1$ as the shape parameter of $\Delta(t)$
on the edge $e$, which is determined as in \eqref{eq_parameters}.

\subsection{Edge-gluing Conditions}\label{subsec_hyperbolic}

Suppose that the shape parameters 
$(Z(t))_{t\in \mathbb{Z}}$ 
gives an ideal tetrahedron decomposition\footnote{Here we do not require completeness. See \S \ref{subsec_complete} for complete hyperbolic structures.}.
This holds if and only if the following three conditions are satisfied.

First, we need the shape parameter periodicity condition as already discussed in
\eqref{eq_periodicity}. Second, we need 
\[
 \mathrm{Im}\, Z(t)>0 \quad  \textrm{for any} \,\, t \quad \textrm{(positivity condition)},
\]
so that the tetrahedron is positively oriented.
Third, for each edge $g\in \widetilde{E}$,
the product of all the shape parameters associated to the elements in $\pi^{-1}(g)$
must be $1$ (\cite{ThurstonLecture}, see Figure \ref{fig_structure}):
\[
\prod_{\bar{g}\in\pi^{-1}(g)}Z_{\bar{g}}=1 \quad \textrm{(edge-gluing equation)}.
\]
\begin{figure}[htbp]
  \centering
  \input{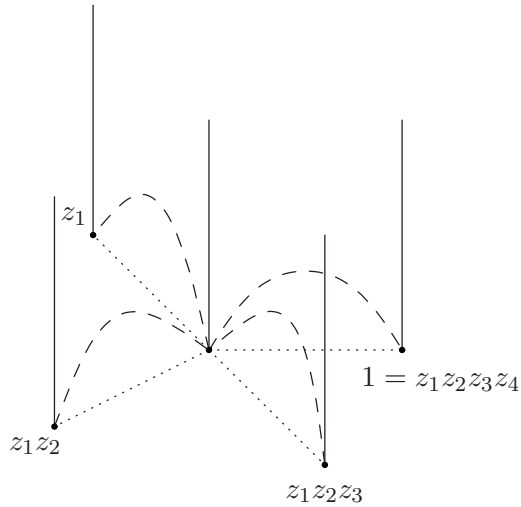}
  \caption{The edge-gluing equation around an edge.}
  \label{fig_structure}
\end{figure}

\subsection{\texorpdfstring{$y$-variables and Gluing Conditions}{y-variables and Gluing Conditions}}\label{subsec_y_structure}

\begin{prop}\label{prop_main}
Let $e(t)\in \tau(t)_1$ be the edge which we flip at $t$ and $e'(t+1)\in \tau(t+1)_1$ be the edge
 which appears after the flip. 
The edge-gluing equation is satisfied for the shape parameters
\begin{equation}\label{eq_A}
Z(t):=-y_{e(t)}(t)\left(=-y_{e'(t+1)}(t+1)^{-1}\right).
\end{equation}
\end{prop}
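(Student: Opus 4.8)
The plan is to show that, for each edge $g\in\widetilde{E}$, the product $\prod_{\bar g\in\pi^{-1}(g)}Z_{\bar g}$ collapses to a telescoping product in the cluster $y$-variables, whose only surviving factors are the ``birth'' and ``death'' contributions of the underlying arc, and these cancel. First I would analyze the combinatorics of the surjection $\pi$. In the layered triangulation attached to the flip sequence $\mathbf{k}$ (the decomposition of \S\ref{subsec_flips}, coming from \cite{agol}), a given $g\in\widetilde{E}$ corresponds to a single arc $e$ of the surface together with the maximal time interval $[s,s']$ during which $e$ is present: $e$ is \emph{born} at time $s$ as the new diagonal $e'(s)$ of the flip $\Delta(s-1)$, and \emph{dies} at time $s'$ as the flipped diagonal $e(s')$ of $\Delta(s')$ (by periodicity every arc is flipped within one period, so $s'<\infty$). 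I would then check that $\pi^{-1}(g)$ consists of exactly three kinds of tetrahedron edges: the diagonal copy of $e$ in $\Delta(s-1)$ (the edge $h'$ there), the diagonal copy in $\Delta(s')$ (the edge $h$ there), and one ``side'' copy in each intermediate $\Delta(r)$, $s\le r<s'$, for which $e$ is a side of the flipped quadrilateral, equivalently $e$ is adjacent to the flipped arc $k_r$ in $\tau(r)$.

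The key computation is a purely local one: to express the shape parameter that $\Delta(r)$ assigns to each of its six edges in terms of $y$-variables. Writing $z=Z(r)=-y_{k_r}(r)$, the diagonals $h,h'$ carry $z$ by \eqref{eq_A}, while by \eqref{eq_parameters} the two pairs of opposite ``side'' edges carry $\tfrac{1}{1-z}$ and $1-z^{-1}$. The crux is to decide \emph{which} opposite side-pair carries which value and to match this with the coefficient mutation \eqref{eq_y}. Using the clockwise convention defining $\overline{Q}^\Delta$ (and the labeling of Figure \ref{fig_flip_tetra}), I would check that the two sides $e$ with $\overline{Q_r}(e,k_r)=+1$ form one opposite pair of the tetrahedron and the two with $\overline{Q_r}(e,k_r)=-1$ form the other. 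Then, via $z=-y_{k_r}(r)$, I would verify the identities
\[
\frac{1}{1-z}=\bigl(1+y_{k_r}(r)\bigr)^{-1},\qquad 1-z^{-1}=y_{k_r}(r)^{-1}\bigl(1+y_{k_r}(r)\bigr),
\]
so that in either case the side shape parameter equals $y_{k_r}(r)^{-Q_r(k_r,e)}\bigl(1+y_{k_r}(r)\bigr)^{-\overline{Q_r}(e,k_r)}$. Comparing with \eqref{eq_y} (taking $i=e$, $k=k_r$), this is precisely $y_e(r)/y_e(r+1)$; that is, each side shape parameter is the ratio of the $y$-variable of that arc before and after the flip.

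With both ingredients in place the edge-gluing equation is immediate. Using the identity $Z(t)=-y_{e(t)}(t)=-y_{e'(t+1)}(t+1)^{-1}$ recorded in \eqref{eq_A}, the birth and death contributions are $-y_e(s)^{-1}$ and $-y_e(s')$ respectively, and the product around $g$ telescopes (the flips $r$ not adjacent to $e$ contribute trivial factors $y_e(r)/y_e(r+1)=1$):
\[
\prod_{\bar g\in\pi^{-1}(g)}Z_{\bar g}
=\bigl(-y_e(s)^{-1}\bigr)\left(\prod_{r=s}^{s'-1}\frac{y_e(r)}{y_e(r+1)}\right)\bigl(-y_e(s')\bigr)
=\bigl(-y_e(s)^{-1}\bigr)\,\frac{y_e(s)}{y_e(s')}\,\bigl(-y_e(s')\bigr)=1,
\]
the two signs cancelling. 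I expect the main obstacle to be the second paragraph: correctly fixing the orientation conventions (the clockwise rule for $\overline{Q}^\Delta$, the labeling of $a,b,c,d,h,h'$, and the geometric cross-ratio assignment of \eqref{eq_parameters}) so that the sign of $\overline{Q_r}(e,k_r)$ matches the choice between $\tfrac{1}{1-z}$ and $1-z^{-1}$. Once this sign bookkeeping is settled, and once the description of $\pi^{-1}(g)$ in the first paragraph is justified from the flip/tetrahedron correspondence of \S\ref{subsec_flips}, the remainder is the formal telescoping above.
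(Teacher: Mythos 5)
Your proposal is correct and follows essentially the same route as the paper: the paper proves your telescoping identity as an induction on time (equation \eqref{eq_key_lemma}, with the birth contribution $Z(t_1)=-y_{g_0}(t_1+1)^{-1}$, the per-flip ratio $\prod_{\bar g\in\Delta(t)_1\cap\pi^{-1}(g)}Z_{\bar g}=y_{g_0}(t)/y_{g_0}(t+1)$ established by the same case analysis on $\overline{Q_t}(e(t),g_0)$, and the final cancellation against $Z(t_2)=-y_{g_0}(t_2)$). The only point to tighten is your claim of ``one side copy per intermediate tetrahedron'': when the arc bounds the flipped quadrilateral on two opposite sides ($\overline{Q_t}(e(t),g_0)=\pm2$) there are two copies in $\pi^{-1}(g)\cap\Delta(t)_1$, both carrying the same cross-ratio, and it is their product $(1-Z(t)^{\mp})^{\pm2}$ that equals $y_{g_0}(t)/y_{g_0}(t+1)$ --- a case the paper treats explicitly (together with ruling out $\overline{Q_t}=0$ with two common triangles via the no-self-folded-arcs assumption).
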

\begin{proof}
Let $g\in \widetilde{E}$ be an edge which appears at the $t_1$-th flip at $\bar{g}'$ and disappear at the $t_2$-th flip $\bar{g}''$ (Figure \ref{fig_edge}).
\begin{figure}[htbp]
  \centering
  \scalebox{0.85}{\includegraphics{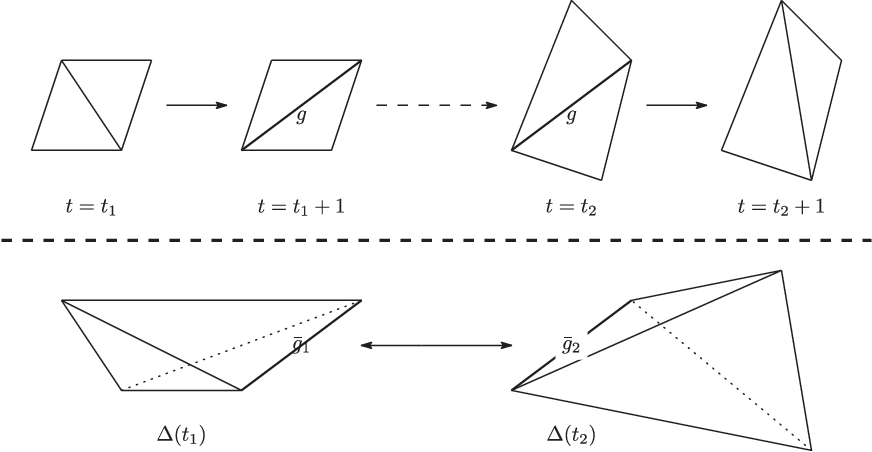}}
  \caption{An edge $g$ in a tetrahedron decomposition appears at time
 $t_1$ and disappears at time $t_2$.}
  \label{fig_edge}
\end{figure}
Let $\bar{g}_1$ (resp. $\bar{g}_2$) be the unique element in $\Delta(t_1)_1\cap \pi^{-1}(g)$ (resp. in $\Delta(t_2)_1\cap \pi^{-1}(g)$).
The gluing equation associated with $g$ is 
\begin{align*}
1&=
\prod^{t_2}_{t=t_1}\prod_{\bar{g}\in \Delta(t)_1\cap \pi^{-1}(g)}Z_{\bar{g}}\\
&=
Z(t_1)\times\left(\prod^{t_2-1}_{t=t_1+1}\prod_{\bar{g}\in \Delta(t)_1\cap \pi^{-1}(g)}Z_{\bar{g}}\right)\times Z(t_2).
\end{align*}
For this equation, we will show 
\begin{equation}\label{eq_key_lemma}
Z(t_1)\times\left(\prod^{T}_{t=t_1+1}\prod_{\bar{g}\in \Delta(t)_1\cap \pi^{-1}(g)}Z_{\bar{g}}\right)=
-y_{g_0}(T+1)^{-1},
\end{equation}
where ${g_0}$ is the edge corresponding to $g$ which $\tau(t)$
 ($t=t_1+1,\ldots,t_2$) have in common.
We show the equation above by induction with respect to $T$. 
The claim for $T=t_1$ trivially follows from the definition \eqref{eq_A}.
Let us assume the above statement for $T\to T-1$. To show the statement
 for $T$, we need to show 
\begin{align*}
\prod_{\bar{g}\in \Delta(t)_1\cap \pi^{-1}(g)}Z_{\bar{g}}&=
y_{g_0}(t)/y_{g_0}(t+1). 
\end{align*}
We will show this by classifying the positional relation of $g_0$ and $e(t)$. 
\begin{itemize}
\item $g_0$ and $e(t)$ have no triangle in common: both side of the equation above is $1$. 
\item $g_0$ and $e(t)$ have a single triangle in common :
\begin{itemize}
\item ${\overline{Q_{t}}}(e(t),g_0)=1$ (see Figure \ref{fig_+}) : 
\begin{figure}[htbp]
  \centering
  \input{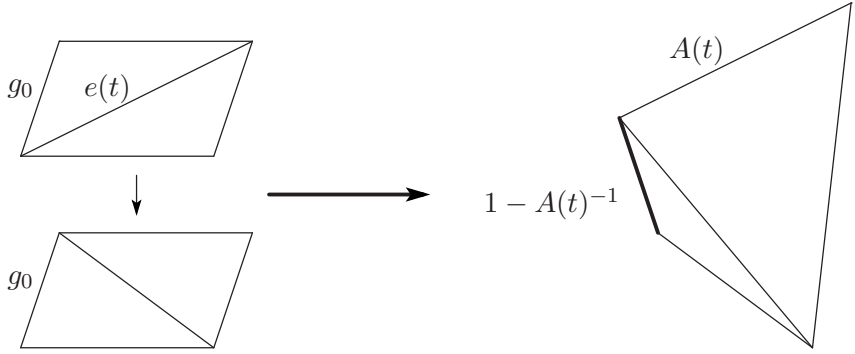}
  \caption{The case with ${\overline{Q_{t}}}(e(t),g_0)=1$.}
  \label{fig_+}
\end{figure}
\[
(\text{LHS})\overset{\text{equation} \eqref{eq_parameters}}{=} 1-Z(t)^{-1} \overset{\text{equation \eqref{eq_y}}}{=} (\text{RHS}),
\] 
\item ${\overline{Q_{t}}}(e(t),g_0)=-1$ :
\[
(\text{LHS})\overset{\text{equation} \eqref{eq_parameters}}{=} (1-Z(t))^{-1} \overset{\text{equation \eqref{eq_y}}}{=} (\text{RHS}),
\] 
\end{itemize}
\item $g_0$ and $e(t)$ have two triangles in common:
\begin{itemize}
\item ${\overline{Q_{t}}}(e(t),g_0)=\pm 2$ (see Figure \ref{fig_++}) :
\begin{figure}[htbp]
  \centering
  \scalebox{0.9}{\input{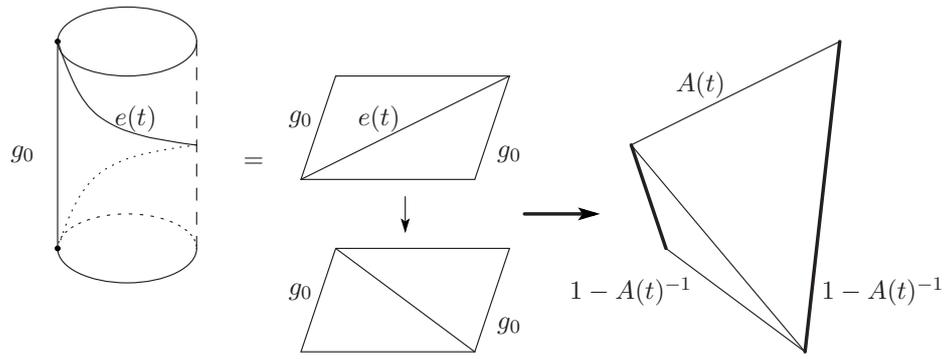}}
  \caption{The case with ${\overline{Q_{t}}}(e(t),g_0)=2$.}
  \label{fig_++}
\end{figure}
\[
(\text{LHS})\overset{\text{equation} \eqref{eq_parameters}}{=} (1-Z(t)^{\mp})^{\pm 2} \overset{\text{equation \eqref{eq_y}}}{=} (\text{RHS}),
\]
\item  
${\overline{Q_{t}}}(e(t),g_0)=0$ : this can not happen because we prohibit self-folded edges in this paper (see Figure \ref{fig_+++}).
\begin{figure}[htbp]
  \centering
  \input{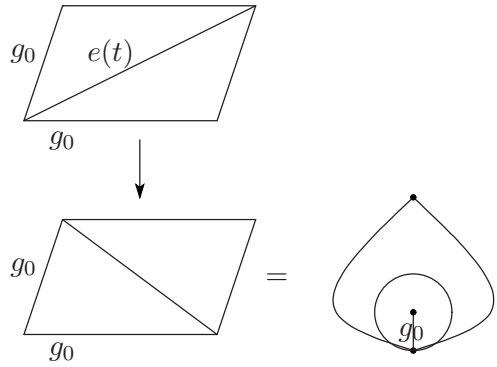}
  \caption{The case with ${\overline{Q_{t}}}(e(t),g_0)=0$.}
  \label{fig_+++}
\end{figure}
\end{itemize}
\end{itemize}

\end{proof}

\subsection{Complete Hyperbolic Structures}\label{subsec_complete}
Patching ideal tetrahedra with corners removed, 
we get a hyperbolic $3$-manifold with boundaries, each of which is isomorphic to a torus. 
Note that such a boundary torus has two directions: the direction of ``time'' parameter $t$ ({\it time direction}) and the direction of the original surface ({\it surface direction})\footnote{We avoid to use the terms ``longitude'' and ``meridian'' to avoid a confusion.}.

The intersection of a removed corner and a boundary torus gives a triangle on the torus with a shape parameter for each angle.

\begin{ex}\label{5point}
We take a five-punctured sphere. 
Let $A$, $B$, $C$, $D$, $O$ the punctures and $\sigma_1$ (resp. $\sigma_2$ or $\sigma_3$) be the Dehn half-twist along a circle containing $A$ and $B$ (resp. $B$ and $C$, or $C$ and $D$) in the anti-clock direction (see Figure \ref{fig_half}). Note that $\sigma_1$, $\sigma_2$ and $\sigma_3$ generate the braid group $B_4$. 
\begin{figure}[htbp]
  \centering
  \includegraphics{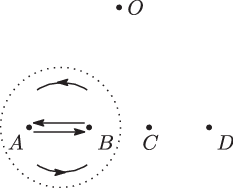}
  \caption{A Dehn half-twist $\sigma_1$ along a circle containing $A$
 and $B$.}
\label{fig_half}
\end{figure}
We take 
\begin{itemize}
\item $\sigma_1\sigma_2\sigma_3^{-1}$ as a mapping class.
\item the triangulation as in Figure \ref{fig_five_triangutation},
\item $8$, $9$, $5$, $7$, $1$, $8$ as a sequence of edges which we
      flip\footnote{Flipping at $8$, $6$ (resp. $6$, $9$, $5$, $7$ or
      $1$, $8$) corresponds to the half-twist $\sigma_1$
      (resp. $\sigma_2$ or $\sigma_3^{-1}$). Canceling the doubled $6$,
      we get the sequence above.}.
\end{itemize}
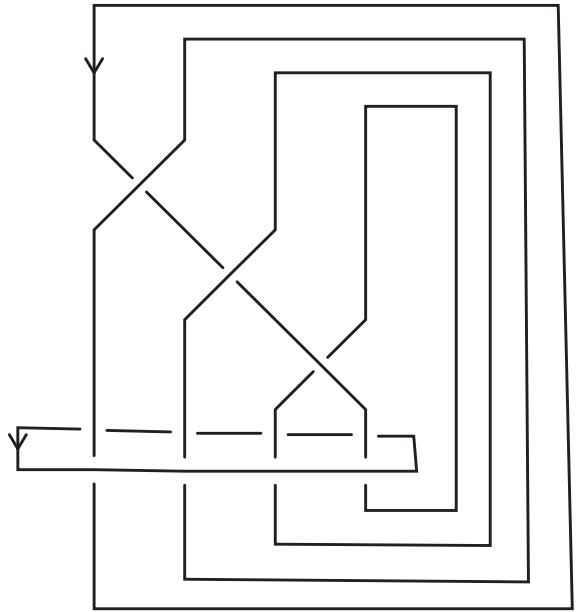
\begin{figure}[htbp]
  \centering
  \begin{center}
\unitlength 0.001in
\begin{picture}(4724,3150)(0,-3150)
    \special{pn 16}
    \special{pa 1278 278}
    \special{pa 1322 352}
    \special{pa 1366 278}
    \special{fp}
    \special{pa 1322 2498}
    \special{pa 1322 3150}
    \special{pa 3798 3150}
    \special{pa 3725 0}
    \special{pa 1322 0}
    \special{pa 1322 703}
    \special{pa 1520 901}
    \special{fp}
    \special{pa 1593 974}
    \special{pa 1989 1370}
    \special{fp}
    \special{pa 2062 1443}
    \special{pa 2494 1875}
    \special{pa 2728 2110}
    \special{pa 2728 2241}
    \special{pa 2728 2359}
    \special{fp}
    \special{pa 2728 2505}
    \special{pa 2728 2637}
    \special{pa 3197 2637}
    \special{pa 3197 527}
    \special{pa 2728 527}
    \special{pa 2728 1641}
    \special{pa 2531 1838}
    \special{fp}
    \special{pa 2457 1912}
    \special{pa 2260 2110}
    \special{pa 2260 2234}
    \special{pa 2260 2359}
    \special{fp}
    \special{pa 2260 2505}
    \special{pa 2260 2813}
    \special{pa 3373 2820}
    \special{pa 3373 352}
    \special{pa 2260 352}
    \special{pa 2260 1172}
    \special{pa 2025 1406}
    \special{pa 1791 1641}
    \special{pa 1791 2227}
    \special{pa 1791 2359}
    \special{fp}
    \special{pa 1791 2505}
    \special{pa 1791 2996}
    \special{pa 3571 3010}
    \special{pa 3549 176}
    \special{pa 1791 176}
    \special{pa 1791 703}
    \special{pa 1556 938}
    \special{pa 1322 1172}
    \special{pa 1322 2212}
    \special{pa 1322 2351}
    \special{fp}
    \special{pn 16}
    \special{pa 883 2241}
    \special{pa 927 2315}
    \special{pa 971 2241}
    \special{fp}
    \special{pa 1249 2212}
    \special{pa 927 2205}
    \special{pa 927 2424}
    \special{pa 1322 2424}
    \special{pa 1791 2432}
    \special{pa 2260 2432}
    \special{pa 2728 2432}
    \special{pa 2992 2432}
    \special{pa 2977 2249}
    \special{pa 2794 2249}
    \special{fp}
    \special{pa 2655 2241}
    \special{pa 2326 2241}
    \special{fp}
    \special{pa 2186 2234}
    \special{pa 1857 2234}
    \special{fp}
    \special{pa 1718 2227}
    \special{pa 1388 2219}
    \special{fp}
\end{picture}
\end{center}
  \caption{The link corresponding to $\sigma_1\sigma_2\sigma_3^{-1}$.}
\label{fig_link}
\end{figure}
\begin{figure}[htbp]
  \centering
  \input{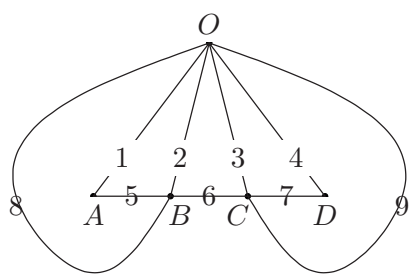}
  \caption{A triangulation of the five-punctured sphere.}
\label{fig_five_triangutation}
\end{figure}
The mapping torus is the complement of the two-component link in $S^2\times S^1$ (Figure \ref{fig_link}),
and hence we have two boundary components.
We show the triangulation of the universal cover of one of the components in Figure \ref{fig_torus}.
\begin{figure}[htbp]
  \centering
  \scalebox{0.83}{\includegraphics{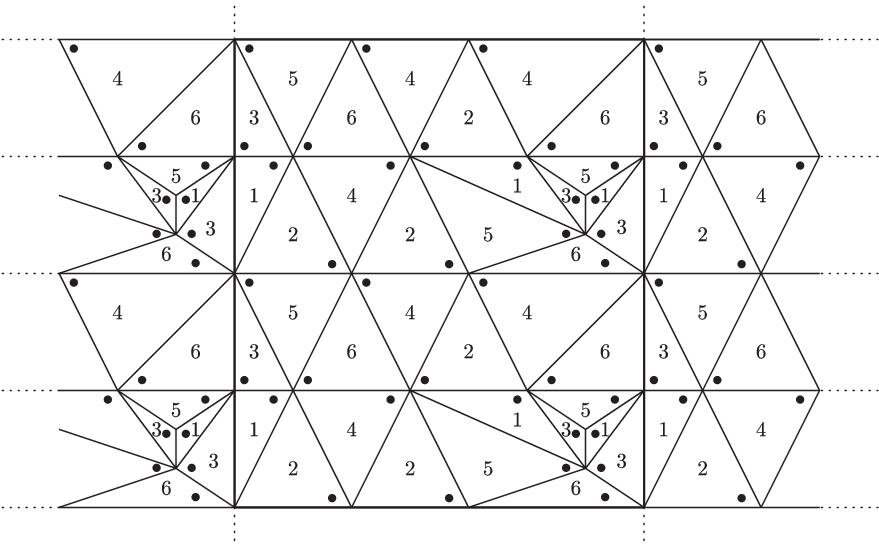}}
  \caption{The triangulation of the boundary torus. A triangle with
 number $t$ represents the $t$-th tetrahedron $\Delta(t)$, whose modulus
 $Z(t)$ corresponds to a dihedral angle represented by a black dot.}
\label{fig_torus}
\end{figure}
\end{ex}
Fix a puncture $m\in M$ of the surface and a time parameter $t_0$. 
Let $F_i$ ($i\in \mathbb{Z}/n\mathbb{Z}$) be the triangle in $\tau(t_0)$ which is adjacent to $e_{i-1}$, $m$ and $e_{i}$, 
where $(e_1,\ldots,e_n)$ is the sequence of arcs around $m$ as before.

On the boundary torus, $e_{i}$ represents a vertex and $F_i$ represents an edge connecting $e_{i-1}$ and $e_{i}$.
The union of $F_i$'s provides a (piecewise linear) closed curve 
on the boundary torus\footnote{As a cycle, this represents the homology generator in the surface direction.}. 
We call this a {\it vertical line} (see Figure \ref{fig_h_lines}).
\begin{figure}[htbp]
  \centering
  \includegraphics[scale=1]{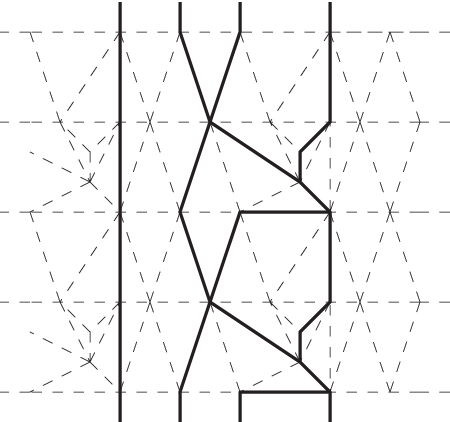}
  \caption{Vertical lines, drawn in the triangulation of Figure \ref{fig_torus}.}
\label{fig_h_lines}
\end{figure}

The {\it holonomy} of along a cycle in the surface direction is given as follows.
A vertical line divides the boundary torus into two parts.
We fix one of them.
For a vertex $e_i$ on the vertical line, we take all angles in the universal cover which have $e_i$ as the vertex and which are on the given side of the vertical line.
We denote by $H_i$ the product of the shape parameters associated to these angles.
Then we have
\begin{align}
(\text{the holonomy in the surface direction})=\prod_i(-H_i).
\label{holonomy}
\end{align}
A hyperbolic structure given by a sequence of shape parameters is complete if and only if the following condition holds: 
\begin{quote}
the holonomy along the surface direction of each boundary is trivial \quad ({\it cusp condition}, \cite{NZ_volume}).
\end{quote}

In Figure \ref{fig_edge}, we study the set of all tetrahedra which are adjacent to an edge. 
In this setting, the vertical line divides the set of these tetrahedra into two groups: tetrahedra which appear before/after $t=t_0$. Hence we have
\[
H_i=
Z(t_1)\times\left(\prod^{t_0-1}_{t=t_1+1}\prod_{\bar{e}_i\in \Delta(t)_1\cap \pi^{-1}(e_i)}Z_{\bar{e_i}}\right).
\]
By \eqref{eq_key_lemma}, the right hand side equals to $-y_{e_i}(t_0)$.
Therefore the holonomy \eqref{holonomy}
is equivalent with $y_m(t_0)=\prod_{i=1}^n y_{e_i}(t_0)$ (recall \eqref{def_ym}).
We can show this product is independent on the choice of $t_0$,
either by induction or by using the edge-gluing conditions (vertical lines at different choices of $t_0$ are homologous in
the triangulation of the boundary torus).

In summary, we get the following description of the holonomy:
\begin{prop}
For a sequence of shape parameters determined by the result of
 Proposition \ref{prop_main}, the holonomy around a puncture $m$ in the
 surface direction is equal to $y_m$.
\end{prop}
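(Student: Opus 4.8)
The plan is to evaluate the holonomy one vertex at a time and then assemble the product. By the description recalled just above, the holonomy in the surface direction around $m$ equals $\prod_i(-H_i)$, where $H_i$ is the product of the shape parameters of all angles sitting at the vertex $e_i$ on the fixed side of the horizontal line. Thus the whole computation reduces to identifying each factor $H_i$, and the natural tool is the identity \eqref{eq_key_lemma} proved inside Proposition \ref{prop_main}.

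First I would pin down the geometric picture. On the boundary torus the vertex $e_i$ carries one dihedral angle for every tetrahedron of the decomposition incident to the edge of $\widetilde{E}$ represented by $e_i$, namely the tetrahedra $\Delta(t)$ with $\Delta(t)_1\cap\pi^{-1}(e_i)\neq\emptyset$. Realizing $e_i$ as the common arc $g_0$ of the triangulations $\tau(t_1+1),\dots,\tau(t_2)$ attached to the edge $g\in\widetilde{E}$ born at time $t_1$ and killed at time $t_2$, with $t_1<t_0\le t_2$, the horizontal line splits the incident tetrahedra into those before and after $t=t_0$. Collecting the ``before'' group gives
\[
H_i=Z(t_1)\times\left(\prod^{t_0-1}_{t=t_1+1}\prod_{\bar{e_i}\in \Delta(t)_1\cap \pi^{-1}(e_i)}Z_{\bar{e_i}}\right),
\]
which is exactly the left-hand side of \eqref{eq_key_lemma} with $T=t_0-1$ and $g=g_0=e_i$.

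Since $T=t_0-1$ satisfies $t_1\le T<t_2$, it lies in the range covered by the induction in the proof of Proposition \ref{prop_main}, so \eqref{eq_key_lemma} applies verbatim and gives $-H_i=y_{e_i}(t_0)$. Taking the product over the arcs $e_1,\dots,e_n$ around $m$ yields
\[
(\text{holonomy})=\prod_i(-H_i)=\prod_{i=1}^n y_{e_i}(t_0),
\]
which is precisely $y_m$ read off in the triangulation $\tau(t_0)$. (If one prefers to express $y_m$ in the base triangulation $\tau$, one invokes in addition the flip-invariance of the product of $y$-variables around a puncture, the same invariance that makes $\underline{T}$ and the mapping class group action of \S\ref{subsec_MCG} well defined.)

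The one genuinely non-formal step is the geometric identification in the second paragraph: one must verify that the angles meeting $e_i$ on the chosen side of the horizontal line are in bijection with the incident tetrahedra $\Delta(t)$, $t=t_1,\dots,t_0-1$, and that each such angle is labelled by the corresponding shape parameter $Z_{\bar{e_i}}$. This is where the cusp cross-section (Figures \ref{fig_torus} and \ref{fig_h_lines}) must be matched with the local combinatorics of edges around a tetrahedron (Figure \ref{fig_edge}); once this matching is in place, the remaining computation is an immediate corollary of the key lemma \eqref{eq_key_lemma}.
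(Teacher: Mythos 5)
Your proposal is correct and follows essentially the same route as the paper: the paper likewise splits the tetrahedra incident to $e_i$ into the groups before and after $t=t_0$ using the horizontal line, identifies $H_i$ with the left-hand side of \eqref{eq_key_lemma} at $T=t_0-1$, and concludes by taking the product over the arcs around $m$. Your added remarks (the range check $t_1\le t_0-1<t_2$, and the flip-invariance of $\prod_i y_{e_i}$ needed to read $y_m$ in the base triangulation) only make explicit what the paper leaves implicit.
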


\subsection{Main Theorem}\label{subsec_main}
Let us summarize our results in the form of a theorem:
\begin{thm}\label{thm_main}
Let $(y_e)_{e\in \tau_1}$ be non-zero complex numbers such that $y_m=1$ for any puncture $m\in M$.
Assume that $y_h(t)\Big|_{y_e(0)=y_e}$ is well-defined for any $h$ and
 $t$ and
that the periodicity equation is satisfied
\[
y_{\sigma(h)} = y_h(l)\Big|_{y_e(0)=y_e}.
\]
Let us define the shape parameters $Z(t)$ by
\[
Z(t):=-y_{e(t)}(t)\Big|_{y_e(0)=y_e},
\]
where $e(t)$ is the edge flipped at time $t$,
and suppose that $Z(t)\neq 0,1$ for any $t$. 
Then $(Z(t))$ satisfies the edge-gluing equations in \S
 \ref{subsec_hyperbolic} and the cusp condition in \S \ref{subsec_complete}.
\end{thm}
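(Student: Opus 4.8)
The plan is to assemble the theorem from the two propositions already proved, treating separately the three requirements that make up the conclusion: the periodicity $Z(t+l)=Z(t)$, the edge-gluing equation $\prod_{\bar g\in\pi^{-1}(g)}Z_{\bar g}=1$, and the cusp condition. (The positivity condition of \S\ref{subsec_hyperbolic} is deliberately \emph{not} part of the claim, consistent with the fact that positivity is only verified in examples.) Throughout, the two standing hypotheses — that every $y_h(t)|_{y_e(0)=y_e}$ is well-defined and that $Z(t)\neq 0,1$ — are precisely what makes the objects in the argument legitimate: $Z(t)\neq0$ keeps $1-Z(t)^{-1}$ finite, $Z(t)\neq1$ keeps $(1-Z(t))^{-1}$ finite, and together they amount to $y_{e(t)}(t)\notin\{0,-1\}$, i.e.\ to non-degeneracy of the next coefficient step \eqref{eq_y} and of the shape parameters \eqref{eq_parameters}.

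First I would verify the periodicity $Z(t+l)=Z(t)$ of \eqref{eq_periodicity}; this is the only place the hypothesis $y_{\sigma(h)}=y_h(l)|_{y_e(0)=y_e}$ is used. The flip sequence is $l$-periodic up to the relabeling permutation $\sigma$, so the edge flipped at time $t+l$ is the $\sigma$-image of the one flipped at time $t$, while the assumed coefficient periodicity says that running the mutation sequence once carries each $y_h$ to $y_{\sigma(h)}$. Substituting into $Z(t)=-y_{e(t)}(t)$ from \eqref{eq_A} gives $Z(t+l)=-y_{e(t+l)}(t+l)=-y_{e(t)}(t)=Z(t)$. The delicate point is to track the $\sigma$-indexing consistently, so that the relabeling in the flip sequence and the relabeling in the coefficient periodicity cancel; this is book-keeping rather than a genuine difficulty.

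Next, the edge-gluing equation is nothing but Proposition \ref{prop_main} applied edge by edge. Each edge $g\in\widetilde E$ in the decomposition of $\Sigma\times\mathbb R$ is born at some flip $t_1$ and dies at some flip $t_2$, so $\pi^{-1}(g)$ is finite and Proposition \ref{prop_main}, specialized to the initial values $y_e(0)=y_e$, yields $\prod_{\bar g\in\pi^{-1}(g)}Z_{\bar g}=1$. The only thing to check is that every edge does have finite lifetime — equivalently that no arc survives unflipped for all time — which is guaranteed because $\mathbf k$ comes from the tetrahedron decomposition of \S\ref{subsec_pA2}; combined with the periodicity just established, this also makes the equation for an edge agree with that for its $l$-shift, so the gluing equations descend consistently to the mapping torus.

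Finally, for the cusp condition I would invoke the holonomy formula of \S\ref{subsec_complete}, which via \eqref{eq_key_lemma} identifies the holonomy around a puncture $m$ in the surface direction with $y_m$. Since the hypothesis assumes $y_m=1$ for every $m\in M$, each such holonomy is trivial, which is exactly the cusp condition. I expect the genuine content to reside entirely in Proposition \ref{prop_main} and in that holonomy computation, both already in hand; the main obstacle in the present argument is therefore the careful $\sigma$-book-keeping of the periodicity step together with checking that the well-definedness hypotheses propagate through every mutation and every shape-parameter formula, rather than any new geometric input.
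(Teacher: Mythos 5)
Your proposal is correct and follows essentially the same route as the paper: the theorem is presented there as a summary whose content is exactly Proposition \ref{prop_main} (edge-gluing via the key lemma \eqref{eq_key_lemma}) together with the holonomy proposition of \S\ref{subsec_complete}, with the periodicity hypothesis supplying \eqref{eq_periodicity} and $y_m=1$ supplying the cusp condition. Your added remarks on how $Z(t)\neq 0,1$ and well-definedness feed into the non-degeneracy of \eqref{eq_y} and \eqref{eq_parameters} are accurate book-keeping that the paper leaves implicit.
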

This theorem gives a systematic method to identify for hyperbolic
structures on mapping tori, formulated 
in the language of cluster algebras.
For a genuine hyperbolic structure we also need to verify
$\mr{Im}(Z(t))>0$,
see the examples in the next section.


\section{Examples}
In the last section, we demonstrate Theorem \ref{thm_main} in the
case of a once-punctured torus and of a five-punctured sphere. 
The examples are chosen for the sake of simplicity, and the same methods
apply to more general mapping classes of more general punctured surfaces (recall Remark \ref{rem.KT}).
We also discuss an example of the six-punctured disc, 
to show that our formulation covers the non-hyperbolic cases 
not covered in Theorem \ref{thm_main}.

\subsection{Once-Punctured Torus and $LR$}
Let us start with a once-punctured torus.
We take a sequence of two flips as in Figure \ref{fig_LR}. 
This is the mapping class studied in \cite[\S 3.1]{TY2}.
Then the shape parameter periodicity conditions are
\begin{align*}
y_1 & = y_2^{-1}\left( 1 + y_1^{-1} (1+y_2^{-1})^{2} \right)^{-2},\\
y_2 & = y_3(1+y_2)^{2}\left(1+y_1(1+y_2^{-1})^{-2}\right)^2,\\
y_3 & = y_1^{-1}(1+y_2^{-1})^{2},
\end{align*}
and the cusp condition is
\[
y_1y_2y_3=1.
\]
Solving these equations, we get a solution
\[
y_1=1,\quad y_2=\frac{-1-\sqrt{-3}}{2},\quad y_3=\frac{-1+\sqrt{-3}}{2}.
\]
By Theorem \ref{thm_main}, shape parameters
\begin{align*}
Z(0)&=-y_2(0)=-y_2=\frac{1+\sqrt{-3}}{2}, \\
Z(1)&=-y_1(1)=-y_1(1+y_2^{-1})^{-2}=\frac{1+\sqrt{-3}}{2}, 
\end{align*}
satisfy edge-gluing conditions. 
Moreover the imaginary parts of $Z(0)$ and $Z(1)$ are positive, and
we obtain a complete hyperbolic structure on the mapping torus.
The parameters coincide with the ones in \cite[\S 3.1]{TY2}.
\begin{figure}[htbp]
  \input{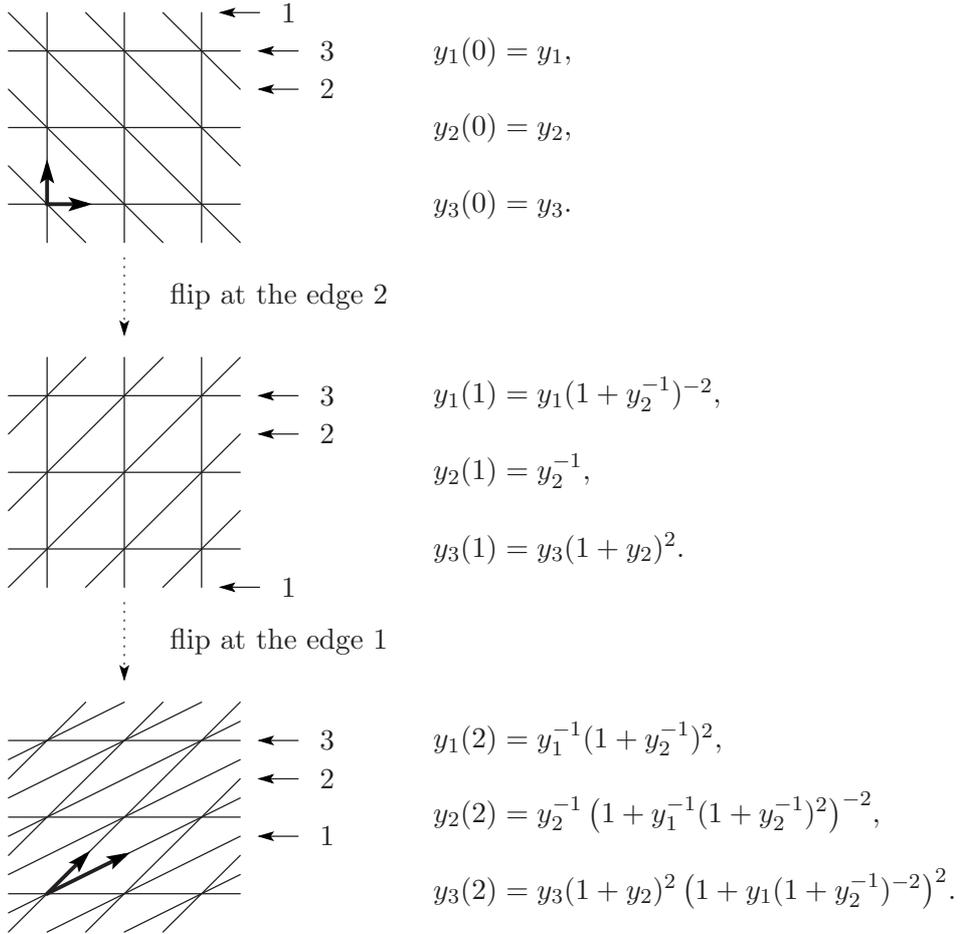}
  \caption{Example: once punctured torus and $LR$.}
  \label{fig_LR}
\end{figure}

\newpage

\subsection{\texorpdfstring{Five Punctured Sphere and $\sigma_1\sigma_2\sigma_3^{-1}$}{Five Punctured Sphere and sigma1 sigma2 sigma3(-1)}}
Let us take the example of a five-punctured sphere in Example
\ref{5point}.

The cusp conditions are
\[
y_1 y_2 y_3 y_4 y_8 y_9
=
y_1 y_5
=
y_2 y_5 y_8 y_6
=
y_3 y_6 y_9 y_7
=
y_4 y_7
=1,
\]
and the shape parameter periodicity conditions are 
{\fontsize{8}{5}\selectfont
\begin{align*}
y_6 &= 
\frac{1}{ y_1 y_5 y_8^2 y_9}
\Bigl(
 (1 + y_5 + y_5 y_8)
  ((1 + y_9 + y_7 y_9) (1 + y_9 + y_8 y_9) +\\
& \hspace{30mm}   y_5 (1 + (1 + y_7) (1 + y_8) y_9) (1 + (1 + y_8 + y_1 y_8) y_9))
\Bigr), \\
y_1 &= y_2 (1 + y_5 + y_5 y_8),\\
y_2 &= 
\frac{y_3 y_7 y_9 (1 + y_9 + y_8 y_9 +
    y_5 (1 + y_8) (1 + (1 + y_8 + y_1 y_8) y_9))}{(1 + y_9 + y_7 y_9) (1 +
     y_9 + y_8 y_9) +
  y_5 (1 + (1 + y_7) (1 + y_8) y_9) (1 + (1 + y_8 + y_1 y_8) y_9)},\\
y_9 &= 
\frac{y_4 (1 + (1 + y_7) (1 + y_8) y_9) (1 + y_9 + y_8 y_9 +
    y_5 (1 + y_8) (1 + (1 + y_8 + y_1 y_8) y_9))}{(1 + y_5 + y_5 y_8) (1 +
    y_9 + y_8 y_9)},\\
y_8 &= \frac{y_1 y_8 y_9}{
 1 + y_9 + y_8 y_9 + y_5 (1 + y_8) (1 + (1 + y_8 + y_1 y_8) y_9)},\\
y_5 &= \frac{y_5 y_6 y_8}{1 + y_5 + y_5 y_8},\\
y_4 &= \frac{(1 + y_9 + y_7 y_9) (1 + y_9 + y_8 y_9) +
  y_5 (1 + (1 + y_7) (1 + y_8) y_9) (1 + (1 + y_8 + y_1 y_8) y_9)}{
 y_7 y_8 y_9},\\
y_3 &= \frac{y_8 (1 + y_9 + y_8 y_9)}{(1 + (1 + y_7) (1 + y_8) y_9) (1 + y_9 +
    y_8 y_9 + y_5 (1 + y_8) (1 + (1 + y_8 + y_1 y_8) y_9))},\\
y_7 &= \frac{y_1 y_5 y_7 y_8 y_9}{(1 + y_9 + y_7 y_9) (1 + y_9 + y_8 y_9) +
  y_5 (1 + (1 + y_7) (1 + y_8) y_9) (1 + (1 + y_8 + y_1 y_8) y_9)}.
\end{align*}
}
Solving the shape parameter periodicity conditions with cusp conditions, we get $14$ solutions. We take one of the solutions
\begin{align*}
y_1 & = 1.781241-0.294452\times \sqrt{-1},\\
y_2 & = 1,\\
y_3 & = -0.304877 + 0.754529\times  \sqrt{-1},\\
y_4 & = 0.460355 + 1.139318\times  \sqrt{-1},\\
y_5 & = 0.546473 + 0.0903361\times  \sqrt{-1},\\
y_6 & = 1.155478 + 1.893847\times  \sqrt{-1},\\
y_7 & = 0.304877 - 0.754529\times  \sqrt{-1},\\
y_8 & = 0.304877 - 0.754529\times  \sqrt{-1},\\
y_9 & = -0.14865 - 0.664193\times  \sqrt{-1}.
\end{align*}
Following the algorithm in Theorem \ref{thm_main}, we get the following six parameters
\begin{align*}
&0.754529 \times  \sqrt{-1} -0.304877,\\
&0.754529\times  \sqrt{-1}+0.695123,\\
&0.294452\times  \sqrt{-1}-0.781241,\\
&0.754529\times  \sqrt{-1}+0.695122,\\
&0.475124\times  \sqrt{-1}+0.311704,\\
&1.139320\times  \sqrt{-1}+0.460354,
%
\end{align*}
whose imaginary parts are positive, which provide a complete hyperbolic structure.
The volume of the mapping torus computed from the parameters above is\footnote{
The hyperbolic volume of an ideal tetrahedron with modulus $z$ is given by the Bloch-Wigner function
\begin{align}
D(z)=\textrm{Im}(\textrm{Li}_2(z))+\textrm{arg}(1-z) \log|z| ,
\end{align}
where $\textrm{Li}_2(z)=-\int_0^z \frac{\log(1-t)}{t} dt$ is the 
Euler classical dilogarithm function.
When a 3-manifold is triangulated by ideal tetrahedra, the hyperbolic volume of the 3-manifold 
is the sum of the hyperbolic volumes of the tetrahedra.
}
\[
4.851170.
\]
This coincides with the value computed by SnapPea/SnapPy \cite{SnapPy}.

\subsection{Non-Hyperbolic Example}


Our formalism discussed in this paper applies to in general non-hyperbolic 3-manifolds
which are themselves not covered in Theorem \ref{thm_main}.
To illustrate this point, let us consider
a disk with six points, and we consider the $1/6$ rotation as a 
mapping class. The mapping class is realized as a sequence of $3$ 
flips as in Figure \ref{fig.disk-fig}. 

\begin{figure}[htbp]
\centering\includegraphics[scale=0.7]{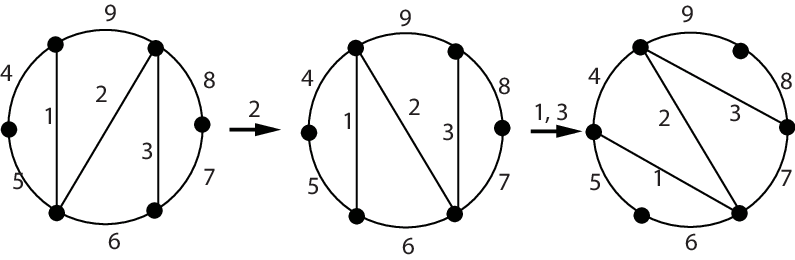}
\caption{A non-hyperbolic example, associated with the $1/6$-rotation of the six-punctured disc.}
\label{fig.disk-fig}
\end{figure}

The periodicity conditions are 
\begin{align*}
&y_1=y_1(3),\ y_2=y_2(3),\ y_3=y_3(3),\\
&y_4=y_5(3),\ y_5=y_6(3),\ y_6=y_7(3),\ y_7=y_8(3),\  
y_8=y_9(3),\ y_9=y_4(3).   
\end{align*}
Note that indices of edges in the boundary are rotated.    
The $y$-variables are given by
\begin{align*}
&y_1(3)=y_1^{-1}(1+y_2)^{-1}, \\
&y_2(3)=y_2^{-1}(1+y_1(1+y_2))(1+y_3(1+y_2)) ,\\
&y_3(3)=y_3^{-1}(1+y_2)^{-1}, \\
&y_4(3)=y_4(1+y_1^{-1}(1+y_2)^{-1})^{-1}, \\
&y_5(3)=y_5(1+y_1(1+y_2)), \\
&y_6(3)=y_6(1+y_2^{-1})^{-1}(1+y_1^{-1}(1+y_2)^{-1})^{-1}, \\
&y_7(3)=y_7(1+y_3^{-1}(1+y_2)^{-1})^{-1}, \\
&y_8(3)=y_8(1+y_3(1+y_2)), \\
&y_9(3)=y_9(1+y_2^{-1})^{-1}(1+y_3^{-1}(1+y_2)^{-1})^{-1}.
\end{align*}
A solution of the periodicity conditions is 
\begin{align*}
&y_1=\frac{1}{2}, \quad
y_2=3 , \quad
y_3=\frac{1}{2}, \quad
y_4=\sqrt{3}, \quad
y_5=\frac{1}{\sqrt{3}}, \\
&y_6=\frac{2}{\sqrt{3}}, \quad
y_7=\sqrt{3}, \quad
y_8=\frac{1}{\sqrt{3}}, \quad
y_9=\frac{2}{\sqrt{3}}.
\end{align*}
Shape parameters of three tetrahedra evaluated at the solution above are
\begin{align*}
&Z(0)=-y_2(0)=-y_2=-3
, \\
&Z(1)=-y_1(1)=-y_1(1+y_2)=-2
, \\
&Z(2)=-y_3(2)=-y_3(1+y_2)=-2
\end{align*} 
Substituting shape parameters to the Rogers dilogarithm $L(x)$, 
we have (with $Z(i)''=(1-Z(i))^{-1}$)
\begin{align*}
L(Z(0)'')+L(Z(1)'')+L(Z(2)'')&=\frac{\pi^2}{6} ,
\end{align*}
which is the complexified volume of the $3$-manifold. 
This is identified with the central charge of $\hat{\rm sl}(2)$ WZW model at 
the level $4$ (see Remark \ref{rem.TBA}). 


\bibliographystyle{amsalpha}
\bibliography{bib.bib}

\noindent Kentaro Nagao

Graduate School of Mathematics, Nagoya University

kentaron@math.nagoya-u.ac.jp \\

\noindent Yuji Terashima

Department of Mathematical and Computing Sciences, 

Tokyo Institute of Technology

tera@is.titech.ac.jp\\

\noindent Masahito Yamazaki

Kavli Institute for the Physics and Mathematics of the Universe, 

University of Tokyo

masahito.yamazaki@ipmu.jp

\end{document}